\newtheorem{theorem}{Theorem}
\newtheorem{corollary}{Corollary}
\newtheorem{proposition}{Proposition}
\theoremstyle{definition}
\newtheorem{definition}{Definition}
\newtheorem{example}{Example}
\newtheorem{problem}{Problem}
\newcommand{\intt}{{\mathrm{ int}\,}}
\newcommand{\card}{{\mathrm{ card}\,}}
\newcommand{\clo}{{\mathrm{ cl}\,}}
\newcommand{\conv}{{\mathrm{ conv}}}
\newfont{\gothic}{eufm9 scaled 1400}
\begin{document}

%\begin{frontmatter}

\title[Steinhaus-type property for boundary of a~convex body]{Steinhaus-type property \\ for a boundary of a~convex body}

\author{Wojciech Jab{\l}o{\'n}ski}

\address{Department of Mathematical Modeling, Faculty of Mathematics and Applied Phy\-sics, Rzesz{\'o}w University of Technology, Powsta{\'n}c{\'o}w War\-sza\-wy 6, PL-35-959 Rzesz{\'o}w}

\keywords{Steinhaus-type theorem, Additive function, mid-convex function convex body, point of flatness}
\subjclass[2010]{39B52, 39B62}
\date{}
\begin{abstract}
We show that if $U\subset\partial A$ is a neighbourhood of a point $x_0\in\partial A$ of the boundary of a convex body $A$ then it has the so-called {\em Stainhaus-type property} ($\intt(U+U)\neq\emptyset$) if and only if $x_0$ is not a point of flatness of the boundary~$\partial A$. This implies that additive functions as well as mid-convex functions, bounded above on~$U$, are continuous.
\end{abstract}

\maketitle

\section{Introduction}

Let $X$ be a real topological vector space.  For nonempty sets $A,B\subset X$ and for $\alpha\in\mathbb{R}$ we define the Minkowiski's operations
$$
\begin{array}{l}
A\pm B:=\{a\pm b\in X: a\in A,\;b\in B\},\\[1ex]
\alpha A:=\{\alpha a\in X:a\in A\,\}.
\end{array}
$$
In the case $B=\{x_0\}\subset X$ we will simply write $x_0+A$ instead of $\{x_0\}+A$. Moreover, for $A\subset X$ and a positive integer $n$ we define
$$
{\mathcal{S}}_n(X)=\underbrace{A+\ldots+A}_{n\text{ times}}.
$$

Let $D\subset X$ be a non-empty convex set (i.e. $\lambda D+(1-\lambda)D\subset D$ for all $\lambda\in[0,1]$). A~function $g:X\to \mathbb{R}$ is called {\em mid-convex} ({\em J-convex}) provided $g \left( \frac{x+y}{2}\right)\leq\frac{g(x)+g(y)}{2}$ for $x,y\in D$. Similarly, a function $a:X\to \mathbb{R}$ is {\em additive} if $a(x+y)=a(x)+a(y)$ for $x,y\in X$. The classical results concerning either J-convex functions or additive functions state that boundedness of such functions on sufficiently large sets imply their continuity. In connection with these results R.~Ger and M.~Kuczma introduced in~\cite{GerKuczma} (for $X=\mathbb{R}^n$) the following classes of sets:
$$
\begin{array}{ll}
{\mathcal{A}}(X)=\{T \subset X: & \mbox{every mid-convex function } f:D\to\mathbb{R} \mbox{ bounded above }\\[.5ex]
& \mbox{on } T \; \mbox{ is continuous on } D,  \mbox{where } \; T\subset D\subset X \\[.5ex]
& \mbox{and } D \mbox{ is nonempty and open}\}, \\[1.5ex]
{\mathcal{B}}(X)=\{T\subset X: &
\mbox{every additive function } a:X\to\mathbb{R} \mbox{ bounded above}\\[.5ex]
& \mbox{on $T$ is continuous}\}.
\end{array}
$$
It is known that every additive function is mid-convex so we always have ${\mathcal{A}}(X)\subset{\mathcal{B}}(X)$. The equality ${\mathcal{A}}(X)={\mathcal{B}}(X)$ holds for $X$ being a real Baire topological vector space (see~\cite{GerKominek}), in particular for $X=\mathbb{R}^n$ (see~\cite{MEK}).

The question which sets belong to either
${\mathcal{A}}(X)$ or ${\mathcal{B}}(X)$ has been a subject of many papers.
From the classical results for mid-convex functions (see Berstein-Doetsch theorem~\cite{BD} and its generalization~\cite{TTZ}) we get $T\in {\mathcal{A}}(X)$ provided $\intt T\neq\emptyset$ in a~real topological vector space. This jointly with the property (see~\cite{Kuczma2})
$$
\mbox{ if }\; {\mathcal{S}}_n(T)\in {\mathcal{A}}(X)\;({\mathcal{S}}_n\in {\mathcal{B}}(X)) \;\mbox{ for some $n\geq2$ then also }\; T\in {\mathcal{A}}(X)\; (T\in {\mathcal{B}}(X)),
$$
implies
$$
T\in {\mathcal{A}}(X)\; (T\in {\mathcal{B}}(X)) \;\mbox{ provided }\, \intt{\mathcal{S}}_n(T)\neq\emptyset \;\mbox{ for some }\,n\geq2.
$$
This nice property leads directly to the Steinhaus-type theorems (theorems of Steinhaus, Piccard and their generalizations) which imply that if $A,B\subset X$ are not small in some sense then $\intt(A+B)\neq\emptyset$. Sets of positive Lebesgue measure, or sets of the second category with the Baire property are in some sense big ones. However there are known "thin sets" $T$ for which the set $T+T$ has a nonempty interior (and these sets belong to ${\mathcal{A}}(X)$ and ${\mathcal{B}}(X)$).

It is known that $C+C=[0,2]$ for the Cantor ternary set, so $C\in{\mathcal{A}}(\mathbb{R})$. M.~Kuczma proved in \cite{Kuczma1} that the graph of a continuous non-affine function defined on an interval belongs to~${\mathcal{A}}(\mathbb{R}^2)$. This results has been next generalized for higher dimensions by R. Ger~\cite{Ger} by proving that the regular $(n-1)$-dimensional hypersurface which is not contained in an $(n-1)$-dimensional affine hyperplane belongs to~${\mathcal{A}}(\mathbb{R}^n)$. In~\cite{Jablonski1} the regularity assumption was weakened and it was proved that the graph of a continuous non-affine function defined on a~non-empty open subset of $\mathbb{R}^{n-1}$ is in~${\mathcal{A}}(\mathbb{R}^n)$. The last results has been generalized  to the product space $X\times\mathbb{R}$ for a~real continuous function defined on an open subset of a real normed space~$X$. R. Ger and M.~Sablik shown in~\cite{GerSablik} that $S(e,\varepsilon)\in{\mathcal{A}}(X)$ for arbitrary neighbourhood $S(e,\varepsilon)\subset S_1$ of an extremal point~$e$ of the unit sphere~$S_1$ in a real normed space~$X$.

P. Volkmann and W. Walter have studied in~\cite{VW} an equivalent condition for an additive function to be continuous. They have proved that an additive operator $F:X\to Y$ mapping a~real normed space $X$ into a real normed space~$Y$ is continuous if and only if $F$ is bounded on a nonempty and (relatively) open subset $A$ of a~unit sphere $S_1$ in $X$ which is not included in a~union of two parallel hyperplanes in $X$. Clearly that result can be simply proved by showing that for such the set~$A$ we have $\intt (A+A)\neq\emptyset$.

Since the mentioned above conditions lead to sets which algebraic sum of some number copies have interior points we introduce the following notions (cf. also \cite{BKR}).

\begin{definition}
Let $X$ be a real topological vector space and fix a positive integer $n\geq2$. We say that a set $A\subset X$ has $n$-{\em Steinhaus-type property} ($A\in{\mathcal{SP}}_n(X)$) whenever
$$
\intt_X\left({\mathcal{S}}_n(A)\right)\neq\emptyset.
$$
We say that a set $A\subset X$ has {\em strong} $n$-{\em Steinhaus-type property} ($A\in{\mathcal{SSP}}_n(X)$) provided
$$
\intt_X\left({\mathcal{S}}_{n-1}(A)\right)=\emptyset\;\mbox{ and }\;\intt_X\left({\mathcal{S}}_n(A)\right)\neq\emptyset.
$$
\end{definition}

Our aim is to study here "small" sets in a real normed space which have $2$-Steinhaus-type property. We prove necessary and sufficient condition for subsets of boundary of a convex body to have $2$-Steinhaus-type property. Finally we discuss some examples and we pose some problems concerning strong $n$-Steinhaus-type property.

From now on $(X,\|\cdot\|)$ will be a real normed space and let $(X^*,\|\cdot\|^*)$ be its dual space. For $v\in X$ let $T_v:X\to X$ be a translation $T_v(x)=x+v$ for $x\in X$. Denote by $\overline{B}$ and~$B$ the closed unit ball and the open unit ball and in~$X$, respectively. Let $S$ and $S^*$ be unit spheres in $X$ and in~$X^*$, respectively.

For arbitrary $x,y\in X$ by $[x,y]\subset X$ we denote the {\em line segment} joining points $x$ and~$y$, i.e. the set $[x,y]=\{(1-t)x+ty:t\in[0,1]\,\}$. By a {\em path} in $X$ we mean every continuous function $\gamma:[0,1]\to X$. We will identify a path $\gamma:[0,1]\to X$ with its image $\Gamma:=\gamma([0,1])=\{\gamma(t)\in X:t\in[0,1]\,\}$. By a {\em hyperplane} in~$X$ we mean a set
$$
H_{x_0^*,c}=\{x\in X:x_0^*(x)=c\,\},
$$
with fixed $x_0^*\in S^*$ and $c\in\mathbb{R}$. Moreover, by
$$
H^-_{x_0^*,c}=\{x\in X:x_0^*(x)<c\,\}\;\mbox{ and } H^+_{x_0^*,c}=\{x\in X:x_0^*(x)>c\,\},
$$
we denote {\em open halfspaces} determined by a hyperplane $H_{x_0^*,c}$. A path $\gamma:[0,1]\to X$ is called {\em plane path} provided $\gamma([0,1])\subset H_{x_0^*,c}$ for some $x_0^*\in S^*$ and $c\in\mathbb{R}$, that is if $x_0^*(\gamma(t))=c$ for $t\in[0,1]$. In this case we call $\gamma$ the $x_0^*$-{\em plane path}.

A nonempty set $A\subset X$ is called {\em convex} provided $[x,y]\subset A$ for every $x,y\in A$. A closed and convex set with interior points we will call a~{\em convex body}. By the well known Hahn-Banach theorem (see also~\cite{M}) we get that for arbitrary convex body $A\subset X$ and for every $x_0\in \partial A$ there exist $x_0^*\in S^*$ and $c\in\mathbb{R}$ such that $x_0^*(x_0)=c$ and $x_0^*(x)\leq c$ for every $x\in A$. This means that through every boundary point of a convex body there passes a
plane supporting the body. In particular, for $\overline{B}\subset X$ and for every $x_0\in S=\partial\overline{B}$ there exists $x_0^*\in S^*$ such that $x_0^*(x_0)=\|x_0\|=1$ and $x_0^*(x)\leq 1$ for every $x\in\overline{B}$. Clearly the existing $x_0^*$ need not be unique, so by $\partial A(x_0)$ ($S(x_0)$, respectively) we denote the set of all $x_0^*\in S^*$ satisfying $x_0^*(x_0)=c$  and $x_0^*(x)\leq c$ for $x\in A$ ($x_0^*(x_0)=1$ and $x_0^*(x)\leq 1$ for $x\in\overline{B}$, respectively). For a~conved body $A\subset X$ a point $x_0\in \partial A$ is called a {\em point of flattening} of~$\partial A$, if there exists $\varepsilon>0$ such that $\partial A\cap(x_0+\varepsilon B)\subset H_{x_0^*,c}$ for some $x_0^*\in \partial A(x_0)$ and $c\in\mathbb{R}$, i.e. if $x_0^*(x)=c$ for $x\in \partial A\cap(x_0+\varepsilon B)$. Note that if $x_0\in \partial A$ is
a flattening point of $\partial A$, then $\card \partial A(x_0)=1$.

The convex hull, closure, interior, boundary and of a set $A\subset X$ will be indicated by $\conv A$, $\clo A$ and $\intt A$. The set $A_\varepsilon=A+\varepsilon B=\{a+\varepsilon b:a\in A,\;b\in B\,\}$ will be an $\varepsilon$-{\em neighrbourhood} of~$A$.

\section{Stainhaus-type results}

We prove here our results for the unit sphere. We begin with the following lemma.

\begin{proposition}\label{lem1}
Let us fix $\varepsilon\in(0,1)$ and $x_0\in S$. Let $\gamma:[0,1]\to X$ be a path, which is not $x_0^*$-plane for some $x_0^*\in S(x_0)$. Then there are $\alpha,\eta>0$ and $t_0\in[0,1]$ such that
\begin{equation}\label{elem1}
(\Gamma-\gamma(t_0)+(1-\alpha)x_0+z) \cap\left(S\cap\left (x_0+\varepsilon B\right)\right) \neq\emptyset\qquad\mbox{ for all }\,z\in\eta B.
\end{equation}
\end{proposition}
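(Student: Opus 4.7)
The plan is to apply the Intermediate Value Theorem to the norm of the translated path $\psi_z(t) := \gamma(t)-\gamma(t_0)+(1-\alpha)x_0+z$, exploiting that any $x_0^* \in S(x_0) \subset S^*$ has $\|x_0^*\|^* = 1$ and therefore provides the lower bound $\|\psi_z(t)\| \ge x_0^*(\psi_z(t))$. First I would set $f := x_0^* \circ \gamma : [0,1]\to\mathbb{R}$ for the $x_0^*$ supplied by the hypothesis; since $\gamma$ is not $x_0^*$-plane, $f$ is continuous and non-constant, so its minimum $m$ is attained on a proper, nonempty, closed subset $A := f^{-1}(\{m\})$ of $[0,1]$. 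The crucial move is then to pick $t_0$ from the relative boundary of $A$ within $[0,1]$: such a $t_0$ exists because $[0,1]$ is connected and $A$ is a proper nonempty closed subset, and it has the key property that $f(t_0)=m$ while every neighbourhood of $t_0$ meets $\{t\in[0,1]:f(t)>m\}$.

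Next, I would fix $\delta\in(0,\varepsilon/2)$, use continuity of $\gamma$ at $t_0$ to obtain a relative neighbourhood $U=(t_0-\sigma,t_0+\sigma)\cap[0,1]$ on which $\|\gamma(t)-\gamma(t_0)\|<\delta$, pick $t_1\in U$ with $f(t_1)>m$, and set $\beta:=f(t_1)-m>0$. Then choose $\alpha,\eta>0$ simultaneously satisfying $\eta<\alpha$, $\alpha+\eta<\beta$, and $\delta+\alpha+\eta<\varepsilon$.

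For any $z\in\eta B$ one has
$$
\|\psi_z(t_0)\|=\|(1-\alpha)x_0+z\|\le 1-\alpha+\eta<1,
$$
while
$$
\|\psi_z(t_1)\|\ge x_0^*(\psi_z(t_1))=f(t_1)-m+(1-\alpha)+x_0^*(z)\ge\beta+1-\alpha-\eta>1.
$$
Since $U$ is an interval containing both $t_0$ and $t_1$, the IVT applied to $t\mapsto\|\psi_z(t)\|$ on the subinterval joining them produces $t^*\in U$ with $\|\psi_z(t^*)\|=1$. Because $t^*\in U$, also
$$
\|\psi_z(t^*)-x_0\|=\|\gamma(t^*)-\gamma(t_0)-\alpha x_0+z\|\le\delta+\alpha+\eta<\varepsilon,
$$
so $\psi_z(t^*)\in S\cap(x_0+\varepsilon B)$; being of the form $\gamma(t^*)-\gamma(t_0)+(1-\alpha)x_0+z$, this point also lies in $\Gamma-\gamma(t_0)+(1-\alpha)x_0+z$, which yields~\eqref{elem1}.

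The main obstacle I anticipate is ensuring that the IVT crossing falls inside $x_0+\varepsilon B$ rather than somewhere arbitrary on $S$. A naive choice of $t_0$ as just any minimum point of $f$ would not suffice: the nearest parameter with $f(t)>m$ might lie far from $t_0$ in $[0,1]$, so that the crossing escapes the $\varepsilon$-neighbourhood of $x_0$. Selecting $t_0$ on the \emph{relative boundary} of $f^{-1}(\{m\})$ is exactly what guarantees the existence of parameters with $f(t)>m$ arbitrarily close to $t_0$, allowing $t_1$, and hence the crossing $t^*$, to be forced as close to $t_0$ as needed; continuity of $\gamma$ then confines the crossing to $x_0+\varepsilon B$.
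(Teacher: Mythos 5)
Your proof is correct, and its endgame coincides with the paper's: translate the path by $-\gamma(t_0)+(1-\alpha)x_0$, check that one endpoint lands strictly inside $B$ and close to $x_0$ while the functional $x_0^*$ forces the other endpoint strictly outside $\overline{B}$, observe that both estimates survive a perturbation by $z\in\eta B$, and apply the intermediate value theorem to $t\mapsto\|\psi_z(t)\|$. Where you genuinely diverge is in the key step of producing the pair $t_0,t_1$ with $t_1$ close to $t_0$ and $x_0^*(\gamma(t_1))-x_0^*(\gamma(t_0))>0$. The paper does this quantitatively: it invokes uniform continuity to get a modulus $\delta$, partitions $[0,1]$ into $n$ pieces with $n\delta\geq1$, and runs a telescoping/pigeonhole argument to extract a subinterval of length less than $\delta$ on which the increment of $x_0^*\circ\gamma$ is at least $(M-m)/n$; the constants $\alpha=\min\bigl(\frac{M-m}{2n},\frac{\varepsilon}{4}\bigr)$ and $\eta$ are then tied to this explicit lower bound. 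You instead take $t_0$ on the relative boundary of the (closed, proper, nonempty) minimum set $f^{-1}(\{m\})$, which by connectedness of $[0,1]$ guarantees parameters with $f>m$ arbitrarily close to $t_0$; the increment $\beta$ is then whatever it happens to be at the chosen $t_1$, and $\alpha,\eta$ are picked afterwards. Your route is shorter and avoids both the reparametrization ("without loss of generality $t_1=0$, $t_2=1$") and the pigeonhole computation; what it gives up is the explicit, uniform lower bound $(M-m)/n$ on the increment, which is irrelevant for this proposition but is the sort of quantitative information one might want elsewhere. Your closing remark correctly identifies the one real danger (an IVT crossing escaping $x_0+\varepsilon B$) and the boundary-point choice of $t_0$ does resolve it.
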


\begin{proof}
Let us fix $\varepsilon\in(0,1)$, $x_0\in S$ and consider a neighbourhood $S\cap(x_0+\varepsilon\overline{B})$ in a relative topology on~$S$. If $\gamma$ be a non $x_0^*$-plane path in $X$ for some $x_0^*\in S(x_0)$ then  for
$$
m:=\inf\{(x_0^*\circ\gamma)(t):t\in[0,1]\,\}\;\mbox{ and }\; M:=\sup\{(x_0^*\circ\gamma)(t):t\in[0,1]\,\}
$$
we have $M-m>0$. Moreover, $x_0^*\circ\gamma$ is a continuous function on a compact interval $[0,1]$, so there are $t_1,t_2\in[0,1]$ with $(x_0^*\circ\gamma)(t_1)=m$ and $(x_0^*\circ\gamma)(t_2)=M$. Without loss of generality we may assume that $t_0=0$ and $t_2=1$ (if $t_1>t_2$ we should change the orientation of our path i.e. we shall take the path $\gamma_1:[0,1]\to X$, $\gamma_1(t)=\gamma(1-t)$; we have to take next a restriction $\gamma|_{[t_1,t_2]}$ or $\gamma_1|_{[t_1,t_2]}$). Finally $\gamma$ is uniformly continuous as a continuous function on a compact interval~$[0,1]$. Therefore we find $\delta>0$ such that
\begin{equation}\label{uc}
\|\gamma(s)-\gamma(t)\|<\frac{\varepsilon}{2}\qquad \mbox{ for all }\,s,t\in[0,1]\;\mbox{ such that }\,|s-t|<\delta.
\end{equation}
Let $n\in\mathbb{N}$ be such that $n\delta\geq1$. We show that there exist $t_0,t_1\in[0,1]$ such that $t_0<t_1<t_0+\delta$ and
\begin{eqnarray}
&&\displaystyle \gamma(t)\in \gamma(t_0)+\frac{\varepsilon}{2}B \qquad\qquad\qquad\qquad\qquad \mbox{ for }\, t\in[t_0,t_1],\label{c1}\\
&&\displaystyle (x_0^*\circ\gamma)(t_1)-(x_0^*\circ\gamma)(t_0) \geq\frac{M-m}{n}.\label{c2}
\end{eqnarray}
Indeed, for arbitrary $t_0,t_1\in[0,1]$ with $t_0<t_1$ and $t_1-t_0<\delta$, from~\eqref{uc} we get
$$
\|\gamma(t)-\gamma(t_0)\|<\frac{\varepsilon}{2}\qquad\mbox{ for }\,t\in[t_0,t_1].
$$
This implies~\eqref{c1}. In order to prove~\eqref{c2} let us suppose that contrary to~\eqref{c2} we have
\begin{equation}\label{c3}
(x_0^*\circ\gamma)(t_1)-(x_0^*\circ\gamma)(t_0) <\frac{M-m}{n}
\end{equation}
for all $t_0,t_1\in[0,1]$ such that $t_0<t_1$ and $t_1-t_0<\delta$. Since $n\delta\geq1$ we can find $0=s_0<s_1<\ldots<s_n=1$ such that $s_k-s_{k-1}<\delta$ for $k\in\{1,\ldots,n\}$. Then by~\eqref{c3} we obtain
$$
(x_0^*\circ\gamma)(s_k)-(x_0^*\circ\gamma)(s_{k-1}) <\frac{M-m}{n}\qquad\mbox{ for }\,k\in\{1,\ldots,n\}.
$$
Thus
$$
\begin{array}{rcl}
M-m&=&\displaystyle (x_0^*\circ\gamma)(1)-(x_0^*\circ\gamma)(0)\\[1ex]
&=&\displaystyle \sum_{k=1}^n \left((x_0^*\circ\gamma)(s_k)-(x_0^*\circ\gamma)(s_{k-1})\right)\\[1ex] &<&\displaystyle \sum_{k=1}^n\frac{M-m}{n}=n\cdot\frac{M-m}{n}=M-n.
\end{array}
$$
This contradiction implies that~\eqref{c2} holds true for some $t_0,t_1\in[0,1]$ such that $t_0<t_1<t_0+\delta$.

Put $\alpha:=\min\left(\frac{M-m}{2n},\frac{\varepsilon}{4}\right)$ and define $\widetilde{\gamma}:[0,1]\to X$,
$$
\widetilde{\gamma}(t)=(T_{-\gamma(t_0)+(1-\alpha)x_0}\circ\gamma)(t) =\gamma(t)-\gamma(t_0)+\left(1-\alpha\right) x_0\qquad\mbox{ for }\,t\in[0,1].
$$
Hence $\widetilde{\Gamma}=\Gamma-\gamma(t_0)+\left(1-\alpha\right) x_0$. Then $\widetilde{\gamma}(t_0)=(1-\alpha)x_0$ and $\|\alpha x_0\|\leq\frac{\varepsilon}{4}$, so $\widetilde{\gamma}(t_0)=(1-\alpha)x_0\in x_0+\frac{\varepsilon}{4}\overline{B}$ and $\|\widetilde{\gamma}(t_0)\|=\|(1-\alpha)x_0\|<1$. Moreover, $(x_0^*\circ\widetilde{\gamma})(t_0)= (1-\alpha)x_0^*(x_0)=1-\alpha<1$. Furthermore, from~\eqref{c1} for every $t\in[t_0,t_1]$ we get
$$
\widetilde{\gamma}(t)=\gamma(t)-\gamma(t_0)+(1-\alpha)x_0 \in\gamma(t_0)-\gamma(t_0)+\frac{\varepsilon}{2}B +(1-\alpha)x_0\subset x_0+\frac{3\varepsilon}{4}B,
$$
which means that $\widetilde{\gamma}([t_0,t_1])\subset x_0+\frac{3\varepsilon}{4}B$. Finally, by~\eqref{c2} we have
$$
\begin{array}{rcl}
(x_0^*\circ\widetilde{\gamma})(t_1)&=& x_0^*(\gamma(t_1)-\gamma(t_0)+(1-\alpha)x_0)\\[2ex]
&=& (x_0^*\circ\gamma)(t_1)-(x_0^*\circ\gamma)(t_0)+ (1-\alpha)x_0^*(x_0)\\[1ex]
&\geq& \displaystyle \frac{M-m}{n}+\left(1-\frac{M-m}{2n}\right)=1+\frac{M-m}{2n}>1,
\end{array}
$$
so $\|\widetilde{\gamma}(t_1)\|>1$.

We have thus proved that $\widetilde{\gamma}(t_0)$ is an interior point of the set $(x_0+\frac{\varepsilon}{2}B)\cap B$, $\widetilde{\gamma}([t_0,t_1])\subset x_0+\frac{3\varepsilon}{4}B$ and $\widetilde{\gamma}(t_1)$ is an interior point of the set $(x_0+\frac{3\varepsilon}{4}B) \cap H^+_{x_0^*,1}$. Fix $\eta<\alpha$ such that $\widetilde{\gamma}(t_0)+\eta B\subset (x_0+\frac{\varepsilon}{2}B)\cap B$ and $\widetilde{\gamma}(t_1)+\eta B\subset (x_0+\frac{3\varepsilon}{4}B)\cap H^+_{x_0^*,1}$.

For arbitrary $z\in\eta B$ we define $\widehat{\gamma}_z:[t_0,t_1]\to X$ by
$$
\widehat{\gamma}_z(t)=(T_z\circ\widetilde{\gamma})(z)= \gamma(t)-\gamma(t_0)+(1-\alpha)x_0+z\qquad\mbox{ for }\,t\in[t_0,t_1],
$$
and let $\widehat{\Gamma}_z:=\{\widehat{\gamma}_z(t):t\in[t_0,t_1]\,\}$. For every $t\in[t_0,t_1]$ we have
$$
\widehat{\gamma}_z(t)=(T_z\circ\widetilde{\gamma})(t)= \widetilde{\gamma}(t)+z\in x_0+\frac{3\varepsilon}{4}B+\eta B\subset x_0+\frac{3\varepsilon}{4}B+\frac{\varepsilon}{4}B=x_0+\varepsilon B,
$$
hence $\widetilde{\Gamma}_z\subset x_0+\varepsilon B$ for every $z\in\eta B$. Moreover,
$$
\widehat{\gamma}_z(t_0)=\widetilde{\gamma}(t_0)+z\in\widetilde{\gamma}(t_0)+\eta B\subset\left(x_0+\frac{\varepsilon}{2}B\right)\cap B\subset \left(x_0+\varepsilon B\right)\cap B.
$$
Finally, $\|x_0^*\|^*=1$ and $\eta<\alpha\leq\frac{M-m}{2n}$, so $\|z\|<\frac{M-m}{2n}$,
$$
\begin{array}{rcl}
(x_0^*\circ\widehat{\gamma}_z)(t_1)&=&x_0^*( \gamma(t_1)-\gamma(t_0)+(1-\alpha)x_0+z)\\[2ex]
%&=& x_0^*(\gamma(t_1)-\gamma(t_0)+(1-\alpha)x_0-(-z))\\[1ex]
&=& x_0^*(\gamma(t_1))-x_0^*(\gamma(t_0))+(1-\alpha)x_0^*(x_0) -x_0^*(-z)\\[1ex]
&\geq&\displaystyle \frac{M-m}{n}+\left(1-\frac{M-m}{2n}\right)- \|x_0^*\|\cdot\|z\|\\[1ex]
&>&\displaystyle 1+\frac{M-m}{2n}-\frac{M-m}{2n}=1,
\end{array}
$$
and
$$
\widehat{\gamma}_z(t_1)=\widetilde{\gamma}(t_1)+z\in\widetilde{\gamma}(t_1)+\eta B\subset\left(x_0+\frac{3\varepsilon}{4}B\right)\cap H^+_{x_0^*,1}\subset \left(x_0+\varepsilon B\right)\cap H^+_{x_0^*,1}.
$$
Let $|\widehat{\gamma}_z|:[t_0,t_1]\to\mathbb{R}$ be defined by $$
|\widehat{\gamma}_z|(t)=\|\widehat{\gamma}_z(t)\|\qquad\mbox{ for }\,t\in[t_0,t_1].
$$
Since $\widehat{\gamma}_z(t_0)\in B$, so $|\widehat{\gamma}_z|(t_0)<1$. Similarly, $\widehat{\gamma}_z(t_1)\in H^+_{x_0^*,1}$, so $\widehat{\gamma}_z(t_1)\in X\setminus\overline{B}$, hence $|\widehat{\gamma}_z(t_1)|(t_1)>1$. By continuity of $|\widehat{\gamma}_z|$ there exists $t_z\in[t_0,t_1]$ such that $|\widehat{\gamma}_z|(t_z)=\|\widehat{\gamma}_z(t_z)\|=1$ and, moreover, $\widehat{\gamma}_z(t_z)\in(x_0+\varepsilon B)$. Thus $$
\widehat{\Gamma}_z\cap(S\cap(x_0+\varepsilon B))\neq\emptyset \qquad\mbox{ for each }\,z\in\eta B.
$$
This finishes the proof.
\end{proof}

From proposition we derive the following result.

\begin{theorem}\label{th1}
Let us fix $\varepsilon\in(0,1)$, $x_0\in S$ and let $\gamma:[0,1]\to X$ be a path, which is not $x_0^*$-plane for some $x_0^*\in S(x_0)$. Then $\intt((S\cap(x_0+\varepsilon B))\pm\Gamma)\neq\emptyset$.
\end{theorem}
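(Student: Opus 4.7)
The plan is to read both signs of the theorem directly off Proposition~\ref{lem1}, which has already done the nontrivial geometric work; what remains is essentially notational rearrangement.

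First I would handle the minus sign. Applying Proposition~\ref{lem1} to $\gamma$ produces $\alpha,\eta>0$ and $t_0\in[0,1]$ with the stated intersection property. For any $z\in\eta B$, the nonempty intersection
$(\Gamma-\gamma(t_0)+(1-\alpha)x_0+z)\cap(S\cap(x_0+\varepsilon B))\neq\emptyset$
yields some $s\in[0,1]$ and some $y\in S\cap(x_0+\varepsilon B)$ with
$$y=\gamma(s)-\gamma(t_0)+(1-\alpha)x_0+z.$$
Rearranging this identity gives $(1-\alpha)x_0-\gamma(t_0)+z=y-\gamma(s)\in(S\cap(x_0+\varepsilon B))-\Gamma$. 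Letting $z$ range over $\eta B$ shows that the whole open ball $(1-\alpha)x_0-\gamma(t_0)+\eta B$ lies inside $(S\cap(x_0+\varepsilon B))-\Gamma$, which therefore has nonempty interior.

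For the plus sign I would apply Proposition~\ref{lem1} to the reversed path $-\gamma\colon[0,1]\to X$, $t\mapsto-\gamma(t)$, whose image is $-\Gamma$. The preliminary check is that $-\gamma$ is also not $x_0^*$-plane: the condition "$x_0^*$-plane" means $x_0^*\circ\gamma$ is constant, and since $x_0^*\circ(-\gamma)=-(x_0^*\circ\gamma)$, non-constancy is preserved, while the same functional $x_0^*\in S(x_0)$ is used. The proposition then yields $\alpha',\eta'>0$ and $t_0'\in[0,1]$ such that for each $z\in\eta' B$ we can pick $s\in[0,1]$ and $y\in S\cap(x_0+\varepsilon B)$ with $y=-\gamma(s)+\gamma(t_0')+(1-\alpha')x_0+z$. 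Rearranging, $\gamma(t_0')+(1-\alpha')x_0+z=y+\gamma(s)\in(S\cap(x_0+\varepsilon B))+\Gamma$, so the open ball $\gamma(t_0')+(1-\alpha')x_0+\eta' B$ sits inside $(S\cap(x_0+\varepsilon B))+\Gamma$.

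There is essentially no obstacle here: Proposition~\ref{lem1} has packaged the real content (using uniform continuity of $\gamma$ to isolate a short subarc on which $x_0^*\circ\gamma$ increases by a definite amount, then translating the subarc so that its endpoints straddle $S$ robustly under the perturbation $z$). The only thing worth being careful about is the symmetry argument for the plus sign, which hinges on the single observation above that "not $x_0^*$-plane" is preserved under $\gamma\mapsto-\gamma$; after that, the two cases are mirror images of each other.
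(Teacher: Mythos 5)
Your proof is correct and follows essentially the same route as the paper: for the minus sign you read the conclusion directly off Proposition~\ref{lem1} by rearranging $y=\gamma(s)-\gamma(t_0)+(1-\alpha)x_0+z$ to exhibit the open ball $(1-\alpha)x_0-\gamma(t_0)+\eta B$ inside $(S\cap(x_0+\varepsilon B))-\Gamma$, and for the plus sign you apply the proposition to $-\gamma$ after checking that non-$x_0^*$-planarity is preserved. This is exactly the paper's argument, stated if anything a bit more carefully.
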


\begin{proof}
Fix $\varepsilon\in(0,1)$, $x_0\in S$ and assume that $\gamma:[0,1]\to X$ is a path, which is not $x_0^*$-plane for some $x_0^*\in S(x_0)$. By Proposition~\ref{lem1} there exist $\alpha,\eta>0$ and $t_0\in[0,1]$ such that~\eqref{elem1} holds. Hence for arbitrary $z\in\eta B$ we find $a\in S\cap(x_0+\varepsilon B)$ such that $a\in \Gamma-\gamma(t_0)+(1-\alpha)x_0+z$. This implies that $a=b-\gamma(t_0)+(1-\alpha)x_0+z$ for some $b\in\Gamma$. Thus
$$
-\gamma(t_0)+(1-\alpha)x_0+\eta B\ni -\gamma(t_0)+(1-\alpha)x_0+z=a-b\in (S\cap(x_0+\varepsilon B))-\Gamma,
$$
so $\intt((S\cap(x_0+\varepsilon B))-\Gamma)\neq\emptyset$.

To prove $\intt((S\cap(x_0+\varepsilon B))+\Gamma)\neq\emptyset$ it is enough to apply Proposition~\ref{lem1} for the path~$-\Gamma$. Indeed, if $\Gamma$ is not $x_0^*$-plane, then also $-\Gamma$ is not $x_0^*$-plane. Then for arbitrary $z\in\eta B$ we find $a'\in S\cap(x_0+\varepsilon B)$ such that $a'\in -\Gamma+\gamma(t_0)+(1-\alpha)x_0+z$. Thus $a'=-b'+\gamma(t_0)+(1-\alpha)x_0+z$ for some $b'\in\Gamma$ and $$
\gamma(t_0)+(1-\alpha)x_0+\eta B\ni \gamma(t_0)+(1-\alpha)x_0+z=a+b\in (S\cap(x_0+\varepsilon B))+\Gamma,
$$
so $\gamma(t_0)+(1-\alpha)x_0+\eta B\subset (S\cap(x_0+\varepsilon B))+\Gamma$ and $\intt((S\cap(x_0+\varepsilon B))+\Gamma)\neq\emptyset$.
\end{proof}

From Theorem! \ref{th1} we obtain the following crucial corollary.

\begin{corollary}\label{cor1}
Fix $x_0\in S$. Then $x_0$ is not a flattening point of $S$ if and only if
$$
\intt((S\cap(x_0+\varepsilon B))+(S\cap(x_0+\varepsilon B)))\neq\emptyset \qquad\mbox{ for arbitrary small }\, \varepsilon>0.
$$
\end{corollary}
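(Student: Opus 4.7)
The plan is to prove the two implications separately. The reverse direction follows directly from the definition of a flattening point, while the forward direction will be a clean application of Theorem~\ref{th1}.

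For the reverse direction I argue by contraposition. Assume $x_0$ is a point of flattening; then by definition there exist $\varepsilon_0>0$ and $x_0^*\in S(x_0)$ with $S\cap(x_0+\varepsilon_0 B)\subset H_{x_0^*,1}$. This inclusion persists for every $\varepsilon\in(0,\varepsilon_0]$, so
$$
(S\cap(x_0+\varepsilon B))+(S\cap(x_0+\varepsilon B))\subset H_{x_0^*,1}+H_{x_0^*,1}\subset H_{x_0^*,2},
$$
an affine hyperplane with empty interior in $X$. This rules out the interior being nonempty for arbitrarily small $\varepsilon$.

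For the forward direction, fix $\varepsilon\in(0,1)$ and pick any $x_0^*\in S(x_0)$, which exists by the Hahn--Banach theorem. The key step is to construct a path $\gamma$ whose image $\Gamma$ lies in $S\cap(x_0+\varepsilon B)$ and which is not $x_0^*$-plane. Once such a $\gamma$ is in hand, Theorem~\ref{th1} yields $\intt((S\cap(x_0+\varepsilon B))+\Gamma)\neq\emptyset$, and because $\Gamma\subset S\cap(x_0+\varepsilon B)$, the desired $\intt((S\cap(x_0+\varepsilon B))+(S\cap(x_0+\varepsilon B)))\neq\emptyset$ follows a fortiori.

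To build $\gamma$ I exploit the non-flattening hypothesis: for every $\delta>0$ the set $S\cap(x_0+\delta B)$ is not contained in $H_{x_0^*,1}$, and since $x_0^*\leq 1$ on $\overline{B}\supset S$, this produces $y\in S\cap(x_0+\delta B)$ with $x_0^*(y)<1$. I then connect $x_0$ to $y$ by the radial-projection path
$$
\gamma(t)=\frac{(1-t)x_0+ty}{\|(1-t)x_0+ty\|},
$$
which clearly lies in $S$, is continuous in $t$, and satisfies $x_0^*(\gamma(0))=1>x_0^*(y)=x_0^*(\gamma(1))$, so $\gamma$ is not $x_0^*$-plane.

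The main (minor) obstacle is to keep $\gamma([0,1])$ inside $x_0+\varepsilon B$. A short estimate based on $\|(1-t)x_0+ty\|\geq 1-t\|y-x_0\|\geq 1-\delta$ gives $\|\gamma(t)-x_0\|\leq 2\delta/(1-\delta)$, so it suffices to pick $\delta$ small enough that this bound is below $\varepsilon$ (for instance $\delta<\varepsilon/(2+\varepsilon)$). With this choice of $\delta$ the path sits inside the prescribed neighbourhood of $x_0$, Theorem~\ref{th1} applies, and the corollary is established.
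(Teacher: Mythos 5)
Your proposal is correct and follows essentially the same route as the paper: the flattening direction traps the sumset in the hyperplane $H_{x_0^*,2}$, and the converse builds a non-$x_0^*$-plane radial-projection path from $x_0$ to a nearby point $y\in S$ with $x_0^*(y)<1$ and invokes Theorem~\ref{th1}. Your only departure is that you explicitly verify $\Gamma\subset x_0+\varepsilon B$ by choosing $y$ in a smaller ball $x_0+\delta B$, a detail the paper's proof glosses over (it takes $z_0\in S\cap(x_0+\varepsilon B)$ directly, for which the projected path is only guaranteed to stay within roughly $2\varepsilon$ of $x_0$), so your version is in fact the more careful one.
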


\begin{proof}
Let us fix $x_0\in S$. Assume first that $x_0$ is a flattening point of $S$. Then there exist $\varepsilon>0$ and $x_0^*\in S(x_0)$ such that $x_0^*(x)=1$ for all $x\in S\cap(x_0+\varepsilon B)$. Thus
$$
x_0^*(x_1+x_2)=x_0^*(x_1)+x_0^*(x_2)=2\qquad\mbox{ for all }\,x_1,x_2\in S\cap(x_0+\varepsilon B),
$$
so $\intt((S\cap(x_0+\varepsilon B))+(S\cap(x_0+\varepsilon B)))=\emptyset$.

Let us assume now that $x_0$ is not a flattening point of $S$. Then for all $x_0^*\in S(x_0)$ and arbitrary small $\varepsilon>0$ there exists $z_0\in S\cap(x_0+\varepsilon B)$ such that $x_0^*(z_0)<1$. Let us fix $x_0^*\in S(x_0)$, $\varepsilon\in(0,1)$ and $z_0\in S\cap(x_0+\varepsilon B)$ with $x_0^*(z_0)<1$. We consider a path $\gamma:[0,1]\to S$,
$$
\gamma(t)=\|(1-t)x_0+tz_0\|\qquad\mbox{ for }\,t\in[0,1].
$$
Then $\gamma$ is not $x_0^*$-plane, and by Theorem~\ref{th1} we get
$$
\intt((S\cap(x_0+\varepsilon B))+(S\cap(x_0+\varepsilon B))) \supset \intt((S\cap(x_0+\varepsilon B))+\Gamma)\neq\emptyset.
$$
This finishes the proof.
\end{proof}

From Corollary~\ref{cor1} one can easily derive the result proved in~\cite{GerSablik}. Moreover, it is clear, that the result proved above is invariant with respect to translations $X\ni x\mapsto x+v\in X$ and uniform scalings $X\ni x\mapsto\lambda x\in X$ with fixed $v\in X$ and $\lambda\neq0$. Thus we have the following result.

\begin{corollary}
For every $\delta>0$ and for all $x\in X$, $r>0$ we have
$$
x+r{\mathcal{S}}_1(x_0,x_0^*,\delta)\in {\mathcal{A}}(X)\cap{\mathcal{B}}(X).
$$
\end{corollary}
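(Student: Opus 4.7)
The plan is to reduce the statement to Corollary~\ref{cor1} together with two by-now-standard observations that have already been collected in the introduction: first, the $2$-Steinhaus-type property is preserved by translations $x\mapsto x+v$ and by uniform scalings $x\mapsto\lambda x$ with $\lambda\neq 0$; second, in a real normed space $X$ one has $\mathcal{A}(X)=\mathcal{B}(X)$, and every set $T\subset X$ with $\intt\mathcal{S}_n(T)\neq\emptyset$ for some $n\geq 2$ lies in $\mathcal{A}(X)$.

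Reading ${\mathcal{S}}_1(x_0,x_0^*,\delta)$ as the spherical neighbourhood $S\cap(x_0+\delta B)$ of a point $x_0\in S$ which is not a flattening point (with $x_0^*\in S(x_0)$ being the distinguished supporting functional), Corollary~\ref{cor1} gives
$$
\intt\bigl({\mathcal{S}}_1(x_0,x_0^*,\delta)+{\mathcal{S}}_1(x_0,x_0^*,\delta)\bigr)\neq\emptyset.
$$
Setting $T:=x+r\cdot{\mathcal{S}}_1(x_0,x_0^*,\delta)$ for arbitrary $x\in X$ and $r>0$, the Minkowski sum satisfies
$$
T+T=2x+r\bigl({\mathcal{S}}_1(x_0,x_0^*,\delta)+{\mathcal{S}}_1(x_0,x_0^*,\delta)\bigr),
$$
and since the map $y\mapsto 2x+ry$ is a homeomorphism of $X$ (here $r>0$), it carries interior points to interior points. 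Consequently $\intt(T+T)\neq\emptyset$, i.e.\ $T\in\mathcal{SP}_2(X)$.

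To conclude, I would apply the chain of implications recalled in Section~1: $\intt(T+T)\neq\emptyset$ forces $T+T$ to satisfy the Bernstein--Doetsch hypothesis, hence $T+T\in\mathcal{A}(X)$; by Kuczma's reduction ${\mathcal{S}}_n(T)\in\mathcal{A}(X)\Rightarrow T\in\mathcal{A}(X)$, therefore $T\in\mathcal{A}(X)$; and the equality $\mathcal{A}(X)=\mathcal{B}(X)$ for Baire spaces (any real normed space is Baire) yields $T\in\mathcal{A}(X)\cap\mathcal{B}(X)$, as required.

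No serious obstacle is anticipated: the entire argument is a translation/dilation transport of Corollary~\ref{cor1} followed by invoking tools already summarised in the introduction. The only point requiring a moment of care is the affine-invariance step, which amounts to the elementary identity $(x+rA)+(x+rA)=2x+r(A+A)$ and the openness of the affine homeomorphism $y\mapsto 2x+ry$; both are immediate.
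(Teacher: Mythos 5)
Your argument is correct and follows exactly the route the paper intends: the identity $(x+rA)+(x+rA)=2x+r(A+A)$ transports the nonempty interior guaranteed by Corollary~\ref{cor1} to $T+T$, and the Bernstein--Doetsch plus Kuczma reduction recalled in the introduction then gives $T\in\mathcal{A}(X)$. One minor remark: your appeal to $\mathcal{A}(X)=\mathcal{B}(X)$ via ``every real normed space is Baire'' is inaccurate (incomplete normed spaces need not be Baire), but it is also unnecessary, since the inclusion $\mathcal{A}(X)\subset\mathcal{B}(X)$ always holds and already yields $T\in\mathcal{A}(X)\cap\mathcal{B}(X)$.
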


Using Theorem~\ref{th1} and Corollary~\ref{cor1} we get also another proof or theorem by P.~Volkmann and W.~Walter~\cite{VW}.

\begin{corollary}
Let $X$ be a real normed space of dimension at least~$2$. If $A\subset S_1$ is a~nonempty and (relatively) open subset of a~unit sphere in $X$ which is not included in a~union of two parallel hyperplanes in $X$, then $\intt(A+A)\neq\emptyset$ and hence $A\in{\mathcal{A}}(X)\cap{\mathcal{B}}(X)$.
\end{corollary}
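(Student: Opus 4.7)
The plan is to apply Theorem~\ref{th1} after selecting a base point $x_0\in A$ and constructing a path $\gamma\subset A$ along which some supporting functional $x_0^*\in S(x_0)$ is non-constant. I would first fix any $x_0\in A$ with some $x_0^*\in S(x_0)$, and apply the hypothesis to the specific pair of parallel hyperplanes $H_{x_0^*,1}$ and $H_{x_0^*,-1}$: since $A\not\subset H_{x_0^*,1}\cup H_{x_0^*,-1}$, there exists $y_0\in A$ with $c:=x_0^*(y_0)\in(-1,1)$. In particular $y_0$ is not a scalar multiple of~$x_0$, so the denominators appearing below never vanish.

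Next, using that $A$ is relatively open in~$S$, I would pick $r>0$ with $(y_0+rB)\cap S\subset A$ and define
\[
\gamma(s)=\frac{y_0+stx_0}{\|y_0+stx_0\|},\qquad s\in[0,1],
\]
for a $t>0$ small enough that $\gamma([0,1])\subset(y_0+rB)\cap S\subset A$. A direct calculation, using $x_0^*(x_0)=1$, gives
\[
x_0^*(\gamma(s))=\frac{c+st}{\|y_0+stx_0\|}.
\]
The triangle inequality bounds $\bigl|\|y_0+stx_0\|-1\bigr|\le st$, whence $\bigl|c(\|y_0+stx_0\|-1)\bigr|\le|c|\,st<st$ for every $s>0$ (using $|c|<1$). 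This forces $x_0^*(\gamma(s))\neq c=x_0^*(\gamma(0))$ for all $s\in(0,1]$, so $\gamma$ is not $x_0^*$-plane.

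Finally I would choose $\varepsilon\in(0,1)$ small enough that $S\cap(x_0+\varepsilon B)\subset A$ (possible because $A$ is open in $S$) and invoke Theorem~\ref{th1} to deduce $\intt\bigl((S\cap(x_0+\varepsilon B))+\Gamma\bigr)\neq\emptyset$. Since both $S\cap(x_0+\varepsilon B)$ and $\Gamma$ lie in $A$, this yields $\intt(A+A)\neq\emptyset$, and then $A\in\mathcal{A}(X)\cap\mathcal{B}(X)$ follows from the general implication $\intt\mathcal{S}_n(T)\neq\emptyset\Rightarrow T\in\mathcal{A}(X)\cap\mathcal{B}(X)$ recalled in the introduction.

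The step I expect to be the main obstacle is producing a path in $A$ along which $x_0^*$ is non-constant: a priori one must rule out that $x_0^*$ could be locally constant on $S$ at~$y_0$. The crucial point is that the hypothesis forces $c=x_0^*(y_0)$ to lie \emph{strictly} inside $(-1,1)$, so that $H_{x_0^*,c}$ fails to be a supporting hyperplane of~$\overline{B}$; the triangle-inequality estimate above is precisely what turns this strict inequality into the non-constancy of $x_0^*\circ\gamma$.
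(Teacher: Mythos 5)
Your proof is correct, but it follows a genuinely different route from the paper's. The paper argues by a dichotomy: either (a) $A$ contains a point with a relatively open neighbourhood that is not $x^*$-flat for any $x^*$, in which case Corollary~\ref{cor1} applies directly, or (b) $A$ contains two relatively open pieces lying in two non-parallel supporting hyperplanes, in which case one takes a plane piece from the first and a segment from the second (which is not plane for the first piece's functional) and invokes Theorem~\ref{th1}. You instead bypass the case analysis entirely: you fix an arbitrary $x_0\in A$ and $x_0^*\in S(x_0)$, use the hypothesis only for the single pair of parallel supporting hyperplanes $H_{x_0^*,1}$ and $H_{x_0^*,-1}$ to produce $y_0\in A$ with $|x_0^*(y_0)|<1$, and then explicitly build a short normalized path in $A$ through $y_0$ on which $x_0^*$ is provably non-constant, so that Theorem~\ref{th1} applies in one stroke. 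Your quantitative estimate ($|c|\,st<st$) correctly converts the strict inequality $|c|<1$ into non-constancy of $x_0^*\circ\gamma$, and all the auxiliary checks (the denominator $\|y_0+stx_0\|\geq 1-st>0$, the inclusion $\Gamma\subset(y_0+rB)\cap S\subset A$ for small $t$) go through. What your approach buys is uniformity and rigor: the paper's two cases are stated somewhat informally and their exhaustiveness requires an additional (omitted) argument that flat pieces in pairwise parallel supporting hyperplanes can only occupy two hyperplanes $H_{x^*,\pm1}$; your construction needs no such discussion and uses only Theorem~\ref{th1}, never Corollary~\ref{cor1}. What the paper's version buys is a more geometric picture of why the hypothesis is sharp (it isolates the two ways a relatively open subset of the sphere can fail to be doubly flat).
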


\begin{proof}
If $A\subset S_1$ is a~nonempty and (relatively) open subset of a~unit sphere in $X$ which is not included in a~union of two parallel hyperplanes then we have the following two possibilities:
\begin{enumerate}[(a)]
\item $A$ contains a point and its relatively open in $S_1$ neighbourhood which not $x^*$-flat for every $x^*\in S_1^*$,
\item $A$ contains two relatively open components contained in two different non-paralel hyperplanes.
\end{enumerate}
In the first case we use Corollary~\ref{cor1} to obtain our statement. In the second case we take any relatively open $x^*$-plane subset of the first component and arbitrary line segment contained in the second one which is clearly not $x^*$-plane. Then Theorem~\ref{th1} finishes the proof.
\end{proof}

We are in position to discuss now the general case of a convex body in~$X$. We prove

\begin{theorem}
Let $K\subset X$ be a convex body and fix $x_0\in \partial K$. Then $x_0$ is not a~flattening point of $\partial K$ if and only if
$$
\intt((\partial K\cap(x_0+\varepsilon B))+(\partial K\cap(x_0+\varepsilon B)))\neq\emptyset \qquad\mbox{ for arbitrary small }\, \varepsilon>0.
$$
\end{theorem}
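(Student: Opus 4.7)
The goal is to run the arguments of Proposition~\ref{lem1}, Theorem~\ref{th1} and Corollary~\ref{cor1} verbatim, with the unit ball $\overline B$ replaced by $K$ and the supporting value $1$ (common value of $x_0^*$ on $S$) replaced by the supporting value $c$ of $K$ at $x_0$. The flattening direction needs no work: if $\partial K\cap(x_0+\varepsilon B)\subset H_{x_0^*,c}$ for some small $\varepsilon>0$, then $x_0^*(x_1+x_2)=2c$ for any two points $x_1,x_2$ in this set, hence $(\partial K\cap(x_0+\varepsilon B))+(\partial K\cap(x_0+\varepsilon B))\subset H_{x_0^*,2c}$ has empty interior.

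\textbf{Replacing the norm.} For the converse, the role played in the sphere proof by the norm $\|\cdot\|$ (whose level sets are $S$ and $\overline B$) is taken over by the Minkowski functional of $K$. Fix any $p\in\intt K$ and let $\mu_p:X\to[0,\infty)$ be the Minkowski functional of $K-p$; it is continuous and sublinear, and satisfies
\[
\mu_p(x-p)<1\iff x\in\intt K,\qquad \mu_p(x-p)=1\iff x\in\partial K.
\]
Because $p$ is interior, every $x_0^*\in\partial K(x_0)$ obeys $x_0^*(p)<c$, so the point $p_\alpha:=(1-\alpha)x_0+\alpha p$ lies in $\intt K$ and satisfies $x_0^*(p_\alpha)=c-\alpha(c-x_0^*(p))<c$; this is the exact analogue of the point $(1-\alpha)x_0$ used in Proposition~\ref{lem1}.

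\textbf{Constructing the path and concluding.} Given $\varepsilon>0$, non-flatness of $x_0$ yields, for each $x_0^*\in\partial K(x_0)$, a point $z_0\in\partial K$ arbitrarily close to $x_0$ with $x_0^*(z_0)<c$. Join $x_0$ to $z_0$ on $\partial K$ by radial projection from $p$,
\[
\gamma(t)=p+\frac{(1-t)(x_0-p)+t(z_0-p)}{\mu_p\bigl((1-t)(x_0-p)+t(z_0-p)\bigr)},\qquad t\in[0,1].
\]
For $z_0$ close enough to $x_0$, continuity of $\mu_p$ together with $\mu_p(x_0-p)=1$ ensures the denominator stays positive, and $\gamma$ is a continuous path on $\partial K$ contained in $x_0+\varepsilon B$, with $\gamma(0)=x_0$ and $\gamma(1)=z_0$, hence not $x_0^*$-plane since $x_0^*(\gamma(0))=c>x_0^*(\gamma(1))$. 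Now repeat the proof of Proposition~\ref{lem1} with $\widetilde\gamma(t):=\gamma(t)-\gamma(t_0)+p_\alpha$ in place of $\gamma(t)-\gamma(t_0)+(1-\alpha)x_0$: the original estimates give $x_0^*(\widetilde\gamma(t_0))<c$ and $x_0^*(\widetilde\gamma(t_1))>c$ for $\alpha$ small, so $\widetilde\gamma(t_0)\in\intt K$ and $\widetilde\gamma(t_1)\notin K$; applying the intermediate value theorem to $t\mapsto\mu_p(\widetilde\gamma(t)+z-p)$ in place of $|\widehat\gamma_z|$ produces, for each $z\in\eta B$, a parameter $t_z\in[t_0,t_1]$ with $\widetilde\gamma(t_z)+z\in\partial K\cap(x_0+\varepsilon B)$. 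This is the convex-body analogue of Proposition~\ref{lem1}; the argument of Theorem~\ref{th1} then yields $\intt((\partial K\cap(x_0+\varepsilon B))+\Gamma)\neq\emptyset$, and since $\Gamma\subset\partial K\cap(x_0+\varepsilon B)$, the desired sumset has nonempty interior. \textbf{The main obstacle} is really only bookkeeping: faithfully translating each appearance of $\|\cdot\|$ and of $(1-\alpha)x_0$ into the correct expressions in $\mu_p$ and $p_\alpha$, and checking that continuity and sublinearity of $\mu_p$ play exactly the role that the norm did before.
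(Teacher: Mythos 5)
Your proof is correct, but it takes a genuinely different route from the paper's. The paper does not re-run the sphere argument with a gauge at all: it fixes an interior point of $K$ (translated to the origin), sets $\delta=\|x_0\|$ and $A=\partial K\cap(x_0+\frac{\delta}{2}B)$, and forms the symmetric convex body $V=\clo\conv\left(A\cup\frac{\delta}{4}B\cup(-A)\right)$, whose Minkowski functional is an equivalent \emph{norm} on $X$ whose unit sphere $S_V$ coincides with $\partial K$ near $x_0$; Corollary~\ref{cor1} is then invoked verbatim for that new norm. You instead keep the asymmetric gauge $\mu_p$ of $K-p$ and re-prove the chain Proposition~\ref{lem1} -- Theorem~\ref{th1} -- Corollary~\ref{cor1} with $\mu_p$ playing the role of $\|\cdot\|$ and $p_\alpha=(1-\alpha)x_0+\alpha p$ playing the role of $(1-\alpha)x_0$. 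This is more work, but it is a faithful generalization: the only properties of the norm the original proofs actually use are continuity, the level-set descriptions of $\intt K$ and $\partial K$, and the intermediate value theorem, all of which survive for the gauge, while symmetry and the ambient metric structure are never needed (the balls $x_0+\varepsilon B$ and $\eta B$ stay in the original norm throughout, exactly as you keep them). What your route buys is self-containedness; what the paper's route buys is brevity, at the cost of leaving to the reader the verification that $S_V\cap(x_0+\varepsilon B_V)\subset\partial K\cap(x_0+\frac{\delta}{2}B)$ for small $\varepsilon$, i.e., that the symmetrization does not disturb the boundary near $x_0$ --- a step your argument avoids entirely. The flattening ("only if") direction is handled identically in both.
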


\begin{proof}
Let $\|\cdot\|$ be a norm in~$X$. Fix $x_0\in\partial K$ and let $z_0\in\intt K$. Since the property which we want to prove is invariant with respect to translations without loss of generality we can assume that $z_0=0$. Let $\delta:=\|x_0\|$ and consider $A:=\partial K\cap\left(x_0+\frac{\delta}{2}B\right)$. Then the set $V:=\clo\conv\left(A\cup\frac{\delta}{4}B\cup(-A)\right)$ is closed, convex, bounded, symmetric with respect to~$0$ and contains $0$ in its interior. Thus the Minkowski's functional $\mu_V$ properly defines a norm $\|\cdot\|_V$ equivalent to $\|\cdot\|$ in~$X$. Obviously dual spaces for $(X,\|\cdot\|)$ and $(X,\|\cdot\|_V)$ coincides. Let $S_V$ and $B_V$ be  unit sphere and unit ball in the norm~$\|\cdot\|_V$, respectively. Then for sufficiently small $\varepsilon>0$ we have
$$
S_V\cap(x_0+\varepsilon B_V)\subset\partial K\cap\left(x_0+\frac{\delta}{2}B\right).
$$
Then from Corollary~\ref{cor1} we get our statement.
\end{proof}

\section{Examples and problems}

We begin with the following simple and obvious example.

\begin{example}
For every $n\in\mathbb{N}$ with $n\geq2$ in the space $\mathbb{R}^n$ there exists a set $A\in{\mathcal{SSP}}_n(\mathbb{R}^n)$. If $e_i$ for $i\in\{0,1,\ldots,n\}$ is a canonical affine basis of~$\mathbb{R}^n$, then the polyline joining points $e_0$, $e_1$, \ldots, $e_n$ has the desired property.
\end{example}

Based on the above example we can ask the following question.

\begin{problem}
Fix $n\in\mathbb{N}$ with $n\geq2$.
Let $\gamma:[0,1]\to\mathbb{R}^n$ define a continuous curve~$\Gamma$ which goes through $(n+1)$ affinely independent points in $\mathbb{R}^n$. Is it true that $\Gamma\in{\mathcal{SP}}_n(\mathbb{R}^n)$?
\end{problem}

Using Theorem 1.3 (a) and (e) from~\cite{BBFS} we get the next example.

\begin{example}
Let $C_{\lambda}$ be a Cantor-type set with selfsimilarity ratio $\lambda$. For $k\in\mathbb{N}$ with $k\geq2$ and for $\lambda\in\left[\frac{1}{k+1},\frac{1}{k}\right)$ we have $C_{\lambda}\in{\mathcal{SSP}}_k(\mathbb{R})$ that is
$$
\intt\left({\mathcal{S}}_{k-1}(C_{\lambda})\right)=\emptyset
\qquad\mbox{ and }\qquad{\mathcal{S}}_k(C_{\lambda})=[0,k].
$$
\end{example}

We can generalize this example to higher dimensions.

\begin{example}
Let $C_{\lambda}$ be a Cantor-type set with selfsimilarity ratio $\lambda$. For $k,n\in\mathbb{N}$ with $k,n\geq2$ and for $\lambda\in\left[\frac{1}{n+1},\frac{1}{n}\right)$ we have $C_{\lambda}\times[0,1]^{n-1}\in{\mathcal{SSP}}_k(\mathbb{R}^n)$ that is
$$
\intt\left({\mathcal{S}}_{k-1}(C_{\lambda})\right)=\emptyset
\qquad\mbox{ and }\qquad{\mathcal{S}}_k(C_{\lambda})=[0,k]^n.
$$
\end{example}

\end{document}